\documentclass[11pt]{article}
\usepackage{amssymb}
\usepackage[all,arc]{xy}
\usepackage{mathrsfs}
\usepackage{extarrows}
\usepackage{xcolor}
\usepackage{amsmath}
\usepackage{amsfonts,epsfig}
\usepackage{amssymb,bm}
\usepackage{graphicx}
\usepackage[T1]{fontenc}
\usepackage{subfig}
\usepackage{pgf}
\usepackage{appendix}

\usepackage{tabularx}  
\usepackage{booktabs}
\usepackage{multirow}
\usepackage{array,makecell}

\usepackage{algorithm}
\usepackage{algpseudocode}
\usepackage{pifont}

\usepackage{tikz}
\usetikzlibrary{decorations.pathreplacing}

\numberwithin{equation}{section}

\usepackage{amsmath,amsthm,amsfonts,amssymb,amscd,color}
\usepackage{hyperref}
\hypersetup{
	colorlinks=true,
	linkcolor=blue,
	anchorcolor=blue,
	citecolor=blue}

\allowdisplaybreaks[4]

\newtheorem{theo}{Theorem}[section]
\newtheorem{lem}{Lemma}[section]

\newtheorem{defi}{Definition}[section]

\newtheorem{que}{Question}[section]

\newcommand{\Z}{\mathbb{Z}}
\newcommand{\F}{\mathbb{F}}

\newcommand{\co}{\mathrm{co}}

\newcommand{\vp}{v_p}

\newcommand{\p}{\mathfrak{p}}

\usepackage{amsthm,amssymb}

\newcommand{\ex}{{\rm ex}}

\begin{document}

	\title{On linear $k$-graphs with codegree Tur\'an density arbitrarily close to zero
		 }
	\author{Xiaona Fang, Yaojun Chen\thanks{Corresponding author: yaojunc@nju.edu.cn}\\
		{\small School of Mathematics, Nanjing University,} {\small Nanjing, 210093, P.R. China}
}
	\date{}
	\maketitle
	
	\begin{abstract}	
		Let $F$ be a $k$-uniform hypergraph, abbreviated as $k$-graph. The codegree Tur\'an density $\pi_{\mathrm{co}}(F)$ is the supremum over all $\gamma \in [0,1)$ such that, for arbitrarily large $n$, there exists an $n$-vertex $F$-free $k$-graph $H$ whose every $(k-1)$-subset of vertices lies in at least $\gamma n$ edges. In this paper, we prove that there is a linear $k$-graph $F$ with $0<\pi_{co}(F) < \varepsilon$ for any $\varepsilon>0$. The special case $k=3$ solve a question proposed by Ding, Lamaison, Liu, Wang and Yang (JLMS, 2025). The main method combines an affine-plane-type incidence structure over a finite field and elementary number-theoretic arguments.

		\vskip 2mm
		\noindent{\bf Keywords}: Codegree Tur\'an density, linear $k$-graph.
		
	\end{abstract}
	
	\section{Introduction}\label{sec1}

Given a $k$-uniform hypergraph $F$ (or simply a $k$-graph), the Tur\'an number of $F$, denoted by $\ex(n,F)$, is the maximum number of edges in an $n$-vertex $k$-graph that contains no copy of $F$. Tur\'an-type problems lie at the heart of extremal combinatorics and trace back to the pioneering works of Mantel and Tur\'an in the early twentieth century.
To study the asymptotic behavior of $\ex(n,F)$, one introduces the Tur\'an density
\[
\pi(F):=\lim_{n\to\infty}\frac{\ex(n,F)}{\binom{n}{k}}.
\]
For ordinary graphs (that is, when $k=2$), Tur\'an densities are relatively well understood. In contrast, the situation for hypergraphs with $k\ge 3$ is considerably more challenging. Despite decades of intensive study, even the Tur\'an densities of the two 3-graphs on four vertices with three and four edges, namely $K_4^{(3)-}$ and $K_4^{(3)}$, remain unknown.

A natural variant of Tur\'an density, introduced by Mubayi and Zhao~\cite{mubayi}, is the codegree Tur\'an density. For a $k$-graph $H$ and a vertex set $S\subseteq V(H)$, let $d_H(S)$ denote the number of edges containing $S$. The minimum codegree of $H$, denoted by $\delta_{\mathrm{co}}(H)$, is defined as the minimum of $d_H(S)$ taken over all $(k-1)$-subsets $S$ of $V(H)$. The codegree Tur\'an number $\ex_{\mathrm{co}}(n,F)$ is the largest possible value of $\delta_{\mathrm{co}}(H)$ among all $n$-vertex $F$-free $k$-graphs $H$, and the corresponding codegree Tur\'an density is
\[
\pi_{\mathrm{co}}(F):=\lim_{n\to\infty}\frac{\ex_{\mathrm{co}}(n,F)}{n}.
\]
It is known that this limit always exists~\cite{mubayi}, and it is not hard to see that
$\pi_{\mathrm{co}}(F)\le \pi(F)$.

 Mubayi and Zhao \cite{mubayi} showed that $\{\pi_{\co}(\mathcal{F}): \mathcal{F} \text{ is a family of } k\text{-graphs}\}$ is dense in $[0,1]$.
Gao, Pikhurko, Rong and Sun \cite{gao} proved that for every rational number $\alpha \in [0,1)$, there exists a finite family of $k$-graphs $\mathcal{F}$ such that $\pi_{\co}(\mathcal{F})=\alpha$.
For the codegree Tur\'an density of a $k$-graph, it has been proved that $0$ and $1-\frac{1}{r}$ for any $r\in \mathbb{N}$ are the accumulation points of
$\{\pi_{\co}(F): F \text{ is a } k\text{-graph}\}$ \cite{li, piga}. A hypergraph is said to be linear if every pair of distinct hyperedges intersects in at most one vertex.
Ding, Lamaison, Liu, Wang and Yang \cite{ding} study the problem of what 3-graphs $F$ satisfy $\pi_{\co}(F) = 0$. They proposed a conjecture and  reduced the problem to the linear $3$-graph case. The codegree Tur\'an density can be arbitrarily close to zero, which partly explains the difficulty of the problem. So they asked the following question.

\begin{que}(Ding, Lamaison, Liu, Wang and Yang, \cite{ding})\label{que}
	For any $\varepsilon>0$, is there a linear $3$-graph $F$ with $0<\pi_{\co}(F) < \varepsilon$?
\end{que}

In this paper, we consider a more general question: is there  a linear $k$-graph $F$ with $0<\pi_{co}(F) < \varepsilon$ for any $\varepsilon>0$, which includes Question \ref{que} ($k=3$) as a special case.  
 Before presenting our conclusion, we first define a linear $k$-graph as follows. The construction is based on an affine-plane-type incidence structure over a finite field.

Let $\Z_n=\{0,1,2,\dots,n-1\}$ denote the additive group of integers
modulo $n$, and $\mathbb{F}_{\p}$ denote a finite field of $\p$ elements.
\begin{defi}\label{O^k}
	For integers $\ell \geq k\geq 3$, let $\p$ be a prime with $\p \geq k-1$ and $\p\neq k$, let $\alpha_0=0\in \mathbb{F}_{\p}$ and $\{ \alpha_1, \dots, \alpha_{k-2} \}\subset \F_{\p}$ be a set of $k-2$ distinct nonzero slopes, we define
	the $k$-uniform linear hypergraph $O^{(k)}_{\ell,\p}$of length $\ell$ as follows. The vertex set is \[V(O^{(k)}_{\ell,\p}) = \{ v_{i,j} ^{t}: i\in \Z_{\p}, j\in \Z_{k-1}, t\in \Z_{\ell} \}.\]
Let
	\[
	e(i,j,t)=\{v^{t+1}_{0,j}\}\cup
	\{v^t_{\alpha_jx+i,x}:x\in\mathbb Z_{k-1}\},
	\]
    where the sum at different superscripts and subscripts is performed in the corresponding additive group.
	The edge set of $O^{(k)}_{\ell,\p}$ is
	\[
	E(O^{(k)}_{\ell,\p})=
	\{e(i,j,t):i\in \Z_{\p}, j\in \Z_{k-1}, t\in \Z_{\ell} \}.
	\]
\end{defi}

We will show that $O^{(k)}_{\ell,\p}$ is linear in Section \ref{sec3}.
As an example, the linear 3-graph $O_{6,2}^{(3)}$ is illustrated in Figure \ref{Ok}.

\begin{figure}
	\centering
	\begin{tikzpicture}
		
		\newcommand{\MyShape}{
			\filldraw[
			fill=red!8,
			fill opacity=0.35,
			draw=red
			]
			({1.5*cos(-60)}, {1.5*sin(-60)})
			to[bend left=20]
			({1*cos(-120)}, {1*sin(-120)})
			to[bend left=20]
			({2.5*cos(-120)}, {2.5*sin(-120)})
			to[bend left=10]
			({1.5*cos(-60)}, {1.5*sin(-60)})
			-- cycle;
			
			\filldraw[
			fill=red!8,
			fill opacity=0.35,
			draw=red
			]
			({1.5*cos(-60)}, {1.5*sin(-60)})
			to[bend left=8]
			({1.5*cos(-120)}, {1.5*sin(-120)})
			to[bend left=8]
			({2*cos(-120)}, {2*sin(-120)})
			to[bend left=4]
			({1.5*cos(-60)}, {1.5*sin(-60)})
			-- cycle;
			
			\filldraw[
			fill=blue!8,
			fill opacity=0.35,
			draw=blue
			]
			({cos(-60)}, {sin(-60)})
			to[bend left=8]
			({cos(-120)}, {sin(-120)})
			to[bend left=8]
			({1.5*cos(-120)}, {1.5*sin(-120)})
			to[bend left=4]
			({cos(-60)}, {sin(-60)})
			-- cycle;
			
			\filldraw[
			fill=blue!8,
			fill opacity=0.35,
			draw=blue
			]
			({cos(-60)}, {sin(-60)})
			to[bend left=1]
			({2*cos(-120)}, {2*sin(-120)})
			to[bend left=8]
			({2.5*cos(-120)}, {2.5*sin(-120)})
			to[bend left=1]
			({cos(-60)}, {sin(-60)})
			-- cycle;
			
			\filldraw ({cos(-60)}, {sin(-60)}) circle (0.03);
			\filldraw ({1.5*cos(-60)}, {1.5*sin(-60)}) circle (0.03);
			\filldraw ({2*cos(-60)}, {2*sin(-60)}) circle (0.03);
			\filldraw ({2.5*cos(-60)}, {2.5*sin(-60)}) circle (0.03);
		}
		
		\MyShape
		
		\begin{scope}[rotate around={-60:(0,0)}]
			\MyShape
		\end{scope}
		\begin{scope}[rotate around={-120:(0,0)}]
			\MyShape
		\end{scope}
		\begin{scope}[rotate around={-180:(0,0)}]
			\MyShape
		\end{scope}
		\begin{scope}[rotate around={-240:(0,0)}]
			\MyShape
		\end{scope}
		\begin{scope}[rotate around={-300:(0,0)}]
			\MyShape
		\end{scope}	
	
	\node at ({cos(-120)+0.15}, {sin(-120)+0.25}) {\tiny $v_{0,0}^1$};
	\node at ({cos(-60)-0.15}, {sin(-60)+0.25}) {\tiny $v_{0,0}^2$};
	
	\node at ({1.5*cos(-120)-0.3}, {1.5*sin(-120)}) {\tiny $v_{0,1}^1$};
	\node at ({2*cos(-120)-0.3}, {2*sin(-120)}) {\tiny $v_{1,0}^1$};
	\node at ({2.5*cos(-120)-0.3}, {2.5*sin(-120)}) {\tiny $v_{1,1}^1$};
	
	\node at ({1.5*cos(-60)-0.1}, {1.5*sin(-60)-0.22}) {\tiny $v_{0,1}^2$};
	\node at ({2*cos(-60)-0.1}, {2*sin(-60)-0.22}) {\tiny $v_{1,0}^2$};
	\node at ({2.5*cos(-60)-0.1}, {2.5*sin(-60)-0.22}) {\tiny $v_{1,1}^2$};
	\end{tikzpicture}
	\caption{$O_{6,2}^{(3)}$}
	\label{Ok}
\end{figure}

\newpage
The main result of this paper is as follows.

\begin{theo}\label{th1.1}
		For any $\varepsilon>0$, there is an $\ell\in \mathbb{N}$ such that $0< \pi_{\co} (O^{(k)}_{\ell,\p} ) < \varepsilon$.
\end{theo}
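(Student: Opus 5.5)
The argument has two parts: a positive lower bound on $\pi_{\co}(O^{(k)}_{\ell,\p})$, and an upper bound that goes to zero as $\ell\to\infty$.

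\emph{Lower bound.} I would look for a construction of $F$-free $k$-graphs with linear minimum codegree. The structure $O^{(k)}_{\ell,\p}$ is a ``cycle of length $\ell$'' of layers, since each edge $e(i,j,t)$ meets layer $t$ in $k-1$ vertices and layer $t+1$ in one vertex. This suggests a modular colouring construction. Fix an integer $m$ (for instance a prime with $m\nmid \ell$) and colour the vertices of $[n]$ uniformly at random, or cyclically, by $\Z_m$. Let $H$ consist of all $k$-sets whose colour sum satisfies a chosen linear congruence modulo $m$. Every $(k-1)$-set then extends in about $n/m$ ways, so $\delta_{\co}(H)\ge (1/m-o(1))n>0$.

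To show that $H$ contains no copy of $O^{(k)}_{\ell,\p}$, I would sum the edge congruences over all $\p(k-1)\ell$ edges. Each vertex $v^t_{a,x}$ lies in exactly one edge as a ``layer-$t$'' vertex, because for fixed $j$ and $x$ the map $i\mapsto \alpha_j x+i$ is a bijection and $j$ ranges over $\Z_{k-1}$; so each such vertex appears $k-1$ times in total. Each vertex $v^{t+1}_{0,j}$ appears in $\p$ further edges. The resulting identity has the form
\[
(k-1)\sum_v c(v)+\p\sum_{t,j}c(v^{t}_{0,j})\equiv \p(k-1)\ell\, r \pmod m .
\]
I would choose the congruence with weights (for example, giving the distinguished vertex a different coefficient) so that this identity is contradictory, using that $\p\ne k$, $\p\ge k-1$ and elementary number theory. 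This is where the hypothesis $\p\neq k$ should enter.

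\emph{Upper bound.} Given $\varepsilon$, take $\ell$ large, say $\ell\ge C/\varepsilon$. Let $H$ be an $n$-vertex $k$-graph with $\delta_{\co}(H)\ge \varepsilon n$. Inside $H$ I would first find the basic gadget: a vertex $w$ together with a ``grid'' of $k-1$ rows of $\p$ vertices such that all the edges $e(i,j,\cdot)$ between one layer and the next are present. Using large codegree and supersaturation or a dependent-random-choice argument, I would find many such layer-to-layer transitions. I would then define an auxiliary digraph on $(k-1)$-tuples, or on layer configurations, in which $X\to Y$ means the edges joining layer $X$ to the apex vertices of $Y$ exist. The codegree condition gives each node out-degree at least roughly $\varepsilon$ times the number of nodes, so by pigeonhole the digraph contains a closed walk of length at most about $1/\varepsilon$. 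The remaining task is to turn it into a closed walk of length exactly $\ell$. For this I would use the flexibility of $\ell$: show that closed walks of two coprime lengths exist through a common node, and concatenate them. This needs the disjointness of the vertices used, which I would obtain by choosing at each step from the $\Omega(n)$ available options while avoiding the $O(\ell)$ vertices already used.

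The main obstacle will be the upper bound. The gadget requires $k-1$ distinct apex vertices $v^{t+1}_{0,j}$ to be simultaneously compatible with the whole $\p\times(k-1)$ grid of the previous layer. Making the transitions composable, so that the $v^{t+1}_{0,j}$ are themselves part of the next grid, is delicate. My first attempt would be to embed greedily along a long path using codegree, and to handle the closing of the cycle by a Ramsey/pigeonhole argument on the bounded set of ``types'' of configurations.
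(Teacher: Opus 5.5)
There are genuine gaps in both halves of your proposal.

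\emph{Lower bound.} The freeness argument you propose cannot work. Summing all $\p(k-1)\ell$ edge congruences gives one congruence with coefficients $k-1$ (ordinary vertices) and $k-1+\p$ (apexes). Since $\p$ is a prime with $\p\ge k-1$, $\gcd(k-1,\p)=1$ unless $\p=k-1$, and in that case $\p$ also divides the right-hand side $\p(k-1)\ell r$. So this congruence is solvable for every $m$ and $r$. No choice of modulus makes it contradictory, because the global sum discards exactly the information you need.

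Your sample modulus is also wrong. If $\gcd(m,k)=1$ (e.g.\ a prime $m\nmid k$), the class $V_s$ with $ks\equiv r\pmod m$ spans a complete $k$-graph on about $n/m$ vertices, so it contains $O^{(k)}_{\ell,\p}$. Weighting a ``distinguished vertex'' is not defined for unordered $k$-sets. If you symmetrise it, an embedding may pick any vertex of each edge as the distinguished one, and the argument collapses.

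The missing idea is to combine the congruences locally. Fix an apex $v^{t+1}_{0,j}$. The traces on layer $t$ of the $\p$ edges $e(i,j,t)$, $i\in\Z_\p$, cover every vertex of layer $t$ exactly once. With colour sum $\equiv 1$, this gives $\p\,\chi(v^{t+1}_{0,j})\equiv \p-\sum_{a,x}\chi(v^t_{a,x})\pmod m$, which is independent of $j$. Now take $m$ a power of a prime $p\mid k$. Since $\p\neq k$ and $\p\ge k-1$ force $\gcd(\p,k)=1$, you get $\gcd(\p,m)=1$; this is where $\p\ne k$ is really used. Hence all apexes of a layer share one colour $x_t$, and the edge $e(0,0,t)$ yields $x_{t+1}\equiv 1-(k-1)x_t\pmod m$.

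The paper then sets $m=p^{v_p(\ell)+1}$ and puts $Y_t=kx_t-1$, a unit modulo $km$, so that $Y_{t+1}\equiv(1-k)Y_t \pmod{km}$. This forces $(1-k)^\ell\equiv 1\pmod{km}$, which the Lifting-the-Exponent Lemma rules out. This choice works for every $\ell$, which matters because the zycle lemma, as quoted, only supplies some $\ell$. (With $m=p$ the recurrence is simply $x_{t+1}\equiv x_t+1$, so your condition $m\nmid\ell$ is the right instinct only together with $m\mid k$.)

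\emph{Upper bound.} Your direct embedding scheme is a research plan, not a proof, and its key steps fail as stated. Minimum codegree controls one-vertex extensions of $(k-1)$-sets. It does not give each layer configuration (a $\p\times(k-1)$ grid plus $k-1$ compatible apexes) out-degree proportional to the number of configurations.

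More seriously, a closed walk in a digraph of configurations revisits configurations, so it reuses vertices. Concatenating closed walks of coprime lengths through a common node therefore never yields an embedding with $\ell$ disjoint layers. Your plan to avoid the $O(\ell)$ used vertices is incompatible with returning to an already used node. Making this rigorous amounts to reproving that zycles have codegree density tending to $0$.

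The paper sidesteps all of this. Each edge $e(i,j,t)$ has exactly one vertex in each class $\{v^t_{a,x}:a\in\Z_\p\}$, $x\in\Z_{k-1}$, and one in $\{v^{t+1}_{a,j}:a\in\Z_\p\}$. So it lies over the zycle edge $v_t^0\cdots v_t^{k-2}v_{t+1}^j$, and $O^{(k)}_{\ell,\p}$ is contained in the $\p$-blow-up of $Z^{(k)}_\ell$. Blow-up invariance (Mubayi--Zhao) and the zycle bound (Piga--Sch\"ulke) then give $\pi_{\co}(O^{(k)}_{\ell,\p})\le\pi_{\co}(Z^{(k)}_\ell)<\varepsilon$ for a suitable $\ell$.
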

Clearly, Theorem \ref{th1.1} answers Question \ref{que} in affirmative.
\section{Preliminaries}

In this section, we introduce some additional definitions and some known results for the purpose to prove Theorem \ref{th1.1}.

\begin{defi}	
	Let $m\ge 2$ be an integer. We define the $n$-vertex $k$-graph $H_m^{(k)}(n)$ as follows. Partition the vertex set of $H_m^{(k)}(n)$ into $m$ almost equal parts, i.e., $V(H_m^{(k)}(n) )= V_0\cup V_1\cup\cdots \cup V_{m-1}$ with $|V_i|=\lfloor n/m\rfloor$ or $\lceil n/m\rceil$.
	For a vertex $u$, we write
	$\chi(u)=s \in \Z_m$
	if and only if
	$ u\in V_s$. And $\{u_1,\dots,u_k\}$ is an edge of $H_m^{(k)}(n)$ if and only if
	\[
	\chi(u_1)+\cdots+\chi(u_k)\equiv 1\pmod m.
	\]	
\end{defi}

\begin{defi}
	Let $c,n$ be positive integers and let $F$ be a $k$-graph on $[n]$. 
	The $c$-blow-up of $F$, denoted by $F(c)$, is the $n$-partite $k$-graph $(V,E)$ with
	$
	V = V_1 \cup V_2 \cup \cdots \cup V_n,
	$
	every $|V_i|=c$ and
	$
	E=\bigl\{\{v_{i_1},v_{i_2},\ldots,v_{i_k}\}: 
	v_{i_j}\in V_{i_j},\ \{i_1,i_2,\ldots,i_k\}\in E(F)\bigr\}.
	$
\end{defi}

\begin{lem}(Mubayi and Zhao, \cite{mubayi})\label{lem_blow}
	 For every positive integer $c$, $\pi_{\co}(F) = \pi_{\co}(F(c ))$.
\end{lem}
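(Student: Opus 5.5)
The statement to address is the blow-up invariance $\pi_{\co}(F)=\pi_{\co}(F(c))$ (Lemma~\ref{lem_blow}). The inequality $\pi_{\co}(F)\le \pi_{\co}(F(c))$ is immediate: $F\subseteq F(c)$ (take one vertex from each part), so every $F$-free $k$-graph is $F(c)$-free, hence $\ex_{\co}(n,F)\le \ex_{\co}(n,F(c))$. The substance is the reverse inequality. We must show that for every $\gamma>\pi_{\co}(F)$ and all large $n$, any $n$-vertex $H$ with $\delta_{\co}(H)\ge \gamma n$ contains $F(c)$.

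The plan is a supersaturation-plus-hypergraph-Kővári--Sós--Turán argument.

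\textbf{Step 1 (supersaturation).} Fix $\pi_{\co}(F)<\gamma'<\gamma$ and let $m=|V(F)|$. Choose $t$ large so that every $t$-vertex $k$-graph with minimum codegree at least $\gamma' t$ contains $F$. Now pick a uniformly random $t$-subset $T\subseteq V(H)$. For each $(k-1)$-set $S\subseteq T$, the number of neighbours of $S$ inside $T$ is hypergeometric with mean roughly $\gamma t$. By Chernoff bounds and a union bound over the $\binom{t}{k-1}$ sets $S$, we get $\delta_{\co}(H[T])\ge \gamma' t$ with probability at least $1/2$, provided $t$ is large compared with $k$ and $1/(\gamma-\gamma')$. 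Each such $T$ contains a copy of $F$, and each copy of $F$ lies in at most $\binom{n-m}{t-m}$ of the sets $T$. Double counting then gives at least $\tfrac12\binom{n}{t}/\binom{n-m}{t-m}=\Omega(n^{m})$ copies of $F$ in $H$. Equivalently, a positive fraction $\beta>0$ of the injective maps $[m]\to V(H)$ are homomorphic embeddings of $F$.

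\textbf{Step 2 (from many copies to a blow-up).} Build an auxiliary $m$-uniform, $m$-partite structure: take $m$ copies of $V(H)$ and declare an $m$-tuple $(x_1,\dots,x_m)$ to be an edge if $i\mapsto x_i$ embeds $F$. This structure has density at least $\beta$. By Erdős's theorem on complete $m$-partite $m$-graphs, i.e.\ $\pi(K^{(m)}(c,\dots,c))=0$, proved by the usual iterated Cauchy--Schwarz/Kővári--Sós--Turán argument, it contains $X_1\times\cdots\times X_m$ with $|X_i|=c$ once $n$ is large. We need the $X_i$ pairwise disjoint; to arrange this, first choose a random partition $V(H)=U_1\cup\dots\cup U_m$ and restrict to embeddings sending $i$ into $U_i$. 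This keeps density at least $\beta m^{-m}$ and forces disjointness. Then every choice $x_i\in X_i$ gives a copy of $F$, so the sets $X_1,\dots,X_m$ span $F(c)$, and $\ex_{\co}(n,F(c))<\gamma n$ for large $n$.

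The main obstacle is Step 1, specifically the concentration bound. It must hold simultaneously for all $(k-1)$-subsets of $T$ while $t$ stays a constant independent of $n$. The point is that the codegree condition, unlike an edge-density condition, is inherited by random subsets with high probability. Everything else is routine.
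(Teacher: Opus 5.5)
Your proof is correct. The inclusion $F\subseteq F(c)$ gives one inequality. For the other, you use codegree supersaturation via random $t$-subsets, bounding the expected number of bad $(k-1)$-sets by $\binom{t}{k-1}e^{-\Omega((\gamma-\gamma')^2t)}$, and then Erd\H{o}s's theorem on complete $m$-partite $m$-graphs, applied after a random partition to force disjoint parts. The paper gives no proof of this lemma and only cites Mubayi and Zhao; your route is essentially their standard supersaturation-plus-Erd\H{o}s argument.
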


\begin{defi}
	For integers $\ell \geq k \geq 3$, the $k$-uniform zycle of length $\ell$ is the $k$-graph $Z_{\ell}^{(k)}$ given by
	\[
	V\bigl(Z_{\ell}^{(k)}\bigr)
	=
	\{v_i^j : i\in \Z_{\ell},\, j\in \Z_{k-1} \}, \text{ and }
	\]
	\[
	E\bigl(Z_{\ell}^{(k)}\bigr)
	=
	\left\{
	v_i^0 v_i^1 \cdots v_i^{k-2}v_{i+1}^j
	:
	i\in \Z_{\ell},\, j\in \Z_{k-1}
	\right\},
	\]
\end{defi}

\begin{lem}(Piga and Sch\"ulke, \cite{piga})\label{lem_Z}
	For any $d \in (0,1]$, there is an $\ell\in \mathbb{N}$ such that $\pi_{\co} (Z_{\ell}^{(k)} ) \leq d$.
\end{lem}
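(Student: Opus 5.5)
The statement above is Lemma~\ref{lem_Z}, so I will not use it; the plan is to reduce the existence of a zycle to finding a short directed cycle in an auxiliary digraph on $(k-1)$-sets. Let $H$ be a large $n$-vertex $k$-graph with $\delta_{\co}(H)\ge dn$, and let $N(S)$ denote the link of a $(k-1)$-set $S$. Define a digraph $D$ on the $(k-1)$-subsets of $V(H)$ by putting $S\to T$ whenever $S\cap T=\emptyset$ and $T\subseteq N(S)$.

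By the definition of $Z_m^{(k)}$, a closed walk $S_0\to S_1\to\cdots\to S_{m-1}\to S_0$ in $D$ with pairwise disjoint $S_i$ is exactly a copy of $Z_m^{(k)}$, with $S_i=\{v_i^0,\dots,v_i^{k-2}\}$. Since $|N(S)|\ge dn$, every vertex of $D$ has out-degree at least $\binom{dn-k+1}{k-1}\ge c\binom{n}{k-1}$, where $c\approx d^{k-1}/2$. A digraph on $N$ vertices with minimum out-degree at least $cN$ contains a directed cycle of length at most roughly $1/c$. Set $M:=\lceil 2/c\rceil$.

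\textbf{Step 1: many short cycles.} I want more than a single short cycle: I want $\Omega(N^{m})$ closed walks of some fixed length $m\le M$ whose $m$ sets are pairwise disjoint. I would start an out-walk at a random $S_0$ and follow $M$ steps. Among the $M+1$ visited sets, pigeonhole over "regions" of $D$ of measure about $c$ should force a return. To make this precise, I would apply the digraph removal lemma: if for every $m\le M$ the number of directed $m$-cycles were $o(N^m)$, then deleting $o(N^2)$ arcs would destroy all cycles of length at most $M$. The resulting digraph still has most vertices of out-degree at least $cN/2$, which contradicts the short-cycle bound on a suitable subdigraph. Walks with intersecting sets are only $O(N^{m-1/(k-1)})$, a negligible fraction, so we obtain some $m\in\{2,\dots,M\}$ with $\Omega(n^{m(k-1)})$ copies of $Z_m^{(k)}$ in $H$.

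\textbf{Step 2: blow-up.} A positive density of copies of the fixed $k$-graph $Z_m^{(k)}$ on $m(k-1)$ vertices yields, by the standard Erd\H{o}s supersaturation argument applied to the associated $m(k-1)$-uniform hypergraph of copies, a copy of the blow-up $Z_m^{(k)}(C)$ for any fixed $C$ once $n$ is large.

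\textbf{Step 3: choosing $\ell$.} Let $\ell:=\operatorname{lcm}(2,3,\dots,M)$. For $m\mid\ell$, the map $v_i^j\mapsto v_{i \bmod m}^j$ is a homomorphism $Z_\ell^{(k)}\to Z_m^{(k)}$ with fibres of size $\ell/m$. Hence $Z_\ell^{(k)}\subseteq Z_m^{(k)}(\ell/m)$, so taking $C=\ell$ gives $Z_\ell^{(k)}\subseteq H$. Because $\ell$ depends only on $d$ and $k$, this shows $\pi_{\co}(Z_\ell^{(k)})\le d$.

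The main obstacle is Step 1. A single short cycle is easy, but it may reuse vertices, and isolated short cycles are useless for supersaturation. One needs a robust, positive-density count of short directed cycles of one common length. The first thing I would try is the removal-lemma argument above, keeping track of the degree loss. As a fallback, I would try a greedy "iterated out-neighbourhood" argument: pick $S_0$, grow out-neighbourhood families of measure at least $c$, and force an intersection within $M$ steps while retaining positive density of choices at each stage.
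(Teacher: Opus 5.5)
The paper does not prove this lemma. It is quoted from Piga and Sch\"ulke and used only as a black box for the upper bound in the main theorem, so your argument is an independent, self-contained route rather than a reconstruction of anything in the text. It is correct in substance:
\begin{itemize}
\item The encoding is exact. With arcs $S\to T$ for disjoint $(k-1)$-sets with $T\subseteq N(S)$, a directed $m$-cycle with pairwise disjoint sets is precisely a copy of $Z_m^{(k)}$.
\item Cycles with an overlapping non-consecutive pair number only $O(N^m/n)$.
\item The Alon--Shapira removal lemma for the directed cycles $C_2,\dots,C_M$ does give $\Omega(N^m)$ good cycles for some $m\le M$.
\item Erd\H{o}s's theorem then yields $Z_m^{(k)}(C)$.
\item The map $v_i^j\mapsto v_{i \bmod m}^j$ is a homomorphism $Z_\ell^{(k)}\to Z_m^{(k)}$ whenever $2\le m\mid \ell$. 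Hence $\ell=\operatorname{lcm}(2,\dots,M)$ absorbs the fact that $m$ depends on $H$.
\end{itemize}

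Two pieces of bookkeeping need repair.
\begin{itemize}
\item The sharp bound $\lceil 1/c\rceil$ on the shortest cycle is the open Caccetta--H\"aggkvist conjecture. Cite Chv\'atal--Szemer\'edi instead, which gives length at most $\lceil 2N/(\delta^{+}+1)\rceil$. Now trace the degree loss. Remove $\gamma N^2$ arcs, then delete the fewer than $2\gamma N/c$ vertices whose out-degree fell below $cN/2$. If $\gamma\le c^2/8$, the remaining digraph has minimum out-degree at least $cN/4$. So the contradiction needs $M\ge \lceil 8/c\rceil$, fixed before invoking removal with parameter $\gamma/M$ for each cycle length. Your choice $M=\lceil 2/c\rceil$ is too small.
\item Take $\ell$ to be a multiple of $\operatorname{lcm}(2,\dots,M)$ with $\ell\ge k$, to respect the convention in the definition of $Z_\ell^{(k)}$.
\end{itemize}

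As for what each route buys: citing costs the paper nothing, while yours makes the lemma self-contained. However, the obstacle you flag in Step 1 is milder than you fear, and the removal lemma (with its regularity machinery and tower-type thresholds on $n$) can be dropped.

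Consecutive sets on a directed cycle of $D$ are disjoint. Therefore \emph{every} directed cycle $S_0\to\cdots\to S_{m-1}\to S_0$, even one whose non-consecutive $S_i$ overlap, is the image of a homomorphism $Z_m^{(k)}\to H$ that is injective on each edge.

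The elementary argument then runs as follows.
\begin{enumerate}
\item A uniformly random $s$-subset of $V(H)$, with $s$ large, has minimum codegree at least $ds/2$ with probability at least $1/2$. This follows from hypergeometric tails and a union bound over its $\binom{s}{k-1}$ subsets.
\item By Chv\'atal--Szemer\'edi, such a subset contains an image $P$ of this kind with $m\le M=O((2/d)^{k-1})$.
\item Pigeonholing over the finitely many possible patterns $P$ gives $\Omega(n^{|V(P)|})$ copies of a single $P$.
\item Erd\H{o}s's theorem gives $P(C)\subseteq H$.
\item The composite homomorphism $Z_\ell^{(k)}\to Z_m^{(k)}\to P$ gives $Z_\ell^{(k)}\subseteq P(\ell(k-1))$.
\end{enumerate}
The value of $\ell$ is the same in both routes; only the way robustness is obtained differs.
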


For a prime $p$ and a nonzero integer $N$, let
	\[
	\vp(N)=\max\{a\ge 0:p^a\mid N\}
	\]
	be the $p$-adic valuation of $N$.
    
The following is the Lifting-the-Exponent Lemma, whose roots trace back to the closely related results of Birkhoff and Vandiver \cite{BirkhoffVandiver1904}.

\begin{lem} (Lifting-the-Exponent Lemma, \cite{Andreescu})\label{lem}
	For any integers $x$ and $y$, a positive integer $n$, and a prime number $p$ such that $p\nmid x$ and $p\nmid y$, the following statements hold:
	
	(i) if $p$ is odd and $p\mid x-y$, then $\vp(x^{n}-y^{n})=\vp(x-y)+\vp(n)$;
	
	(ii) if $p=2$ and $4\mid x-y$, then $v_2(x^{n}-y^{n})=v_2(x-y)+v_2(n)$.
\end{lem}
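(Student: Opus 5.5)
The plan is to reduce to two base cases, $p\nmid n$ and $n=p$, and then iterate along the factorization $n=p^a m$ with $p\nmid m$. Throughout, write
\[
S_n(x,y)=\frac{x^n-y^n}{x-y}=\sum_{i=0}^{n-1}x^{n-1-i}y^i ,
\]
so that $\vp(x^n-y^n)=\vp(x-y)+\vp(S_n(x,y))$. It then suffices to show that $\vp(S_n(x,y))=\vp(n)$ under the hypotheses of (i) or (ii).

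\emph{Step 1: $p\nmid n$.} Since $x\equiv y\pmod p$, every term of $S_n$ is congruent to $x^{n-1}$ modulo $p$. Hence $S_n\equiv n x^{n-1}\pmod p$. This is nonzero modulo $p$ because $p\nmid n$ and $p\nmid x$, so $\vp(S_n)=0=\vp(n)$. This step works for every prime $p$, including $p=2$.

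\emph{Step 2: $n=p$ with $p$ odd.} Write $y=x+d$ with $p\mid d$. Modulo $p^2$ we have $y^i\equiv x^i+i x^{i-1}d$, since the higher binomial terms contain $d^2$. Summing over $i$ gives
\[
S_p\equiv p\,x^{p-1}+d\,x^{p-2}\sum_{i=0}^{p-1}i
= p\,x^{p-1}+d\,x^{p-2}\,\frac{p(p-1)}{2}\pmod{p^2}.
\]
Because $p$ is odd, $\frac{p(p-1)}{2}$ is divisible by $p$. Together with $p\mid d$, the second term is $\equiv 0\pmod{p^2}$. Thus $S_p\equiv p x^{p-1}\not\equiv 0\pmod{p^2}$, and since $p\mid S_p$ we get $\vp(S_p)=1$. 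This is the main obstacle of the proof, and it is exactly where oddness of $p$ is used.

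\emph{Step 3: induction for (i).} Write $n=p^a m$ with $p\nmid m$. Apply Step 2 successively to the pairs $(x^{p^j},y^{p^j})$ for $j=0,\dots,a-1$. These pairs remain coprime to $p$, and their difference remains divisible by $p$, so each application adds exactly $1$ to the valuation. Finally apply Step 1 to $(x^{p^a},y^{p^a})$ with exponent $m$, which adds nothing. This yields $\vp(x^n-y^n)=\vp(x-y)+a$.

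\emph{Step 4: case (ii), $p=2$ and $4\mid x-y$.} The odd part of $n$ is handled by Step 1. For a single factor of $2$, use $x^2-y^2=(x-y)(x+y)$. Since $x\equiv y\pmod 4$, we have $x+y\equiv 2x\pmod 4$ with $x$ odd, so $v_2(x+y)=1$. Moreover, $x^2-y^2$ is still divisible by $4$, and $x^2,y^2$ are odd. Hence the step can be repeated $v_2(n)$ times, exactly as in Step 3. The hypothesis $4\mid x-y$ is precisely what replaces the failing parity argument of Step 2 when $p=2$.
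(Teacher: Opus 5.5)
Your proof is correct, and it is the standard proof of the Lifting-the-Exponent Lemma, essentially as in the cited source of Andreescu et al.: the base cases $p\nmid n$ (via $S_n\equiv nx^{n-1}\pmod p$) and $n=p$ (via the expansion modulo $p^2$, using $p\mid \frac{p(p-1)}{2}$ for odd $p$), then iteration along $n=p^a m$, and for $p=2$ the factorization $x^2-y^2=(x-y)(x+y)$ with $v_2(x+y)=1$ when $4\mid x-y$. The paper gives no proof and only cites the lemma; the one point you leave implicit is $x\neq y$, which the statement tacitly assumes so that $\vp(x-y)$ is defined, and which holds in the paper's application $x=1-k$, $y=1$.
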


\section{Proof of the main theorem}\label{sec3}

\begin{lem}
	The $k$-uniform hypergraph $O^{(k)}_{\ell,\p}$ is linear.
\end{lem}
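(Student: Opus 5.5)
The plan is to verify linearity by a direct case analysis on the layer indices $t,t'$ of two distinct edges $e(i,j,t)$ and $e(i',j',t')$. First I record the shape of an edge. The edge $e(i,j,t)$ consists of one \emph{apex} vertex $v^{t+1}_{0,j}$ in layer $t+1$ and $k-1$ \emph{base} vertices $v^t_{\alpha_jx+i,x}$ in layer $t$, one in each column $x\in\Z_{k-1}$. Here we identify $\Z_{\p}$ with $\F_{\p}$. Since $\p\ge k-1$, the elements $0,1,\dots,k-2$ are distinct in $\F_{\p}$.

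The base vertices are pairwise distinct because their second indices differ. The apex is distinct from them because $t+1\neq t$ in $\Z_\ell$. Hence every edge indeed has exactly $k$ vertices. I also note that $\ell\ge k\ge 3$, so the layers $t-1,t,t+1,t+2$ are such that $t+2\neq t$ and $t-1\ne t+1$.

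Next I handle edges in different layers. Edge $e(i,j,t)$ lives in layers $t$ and $t+1$ only. If $t'\notin\{t-1,t,t+1\}$, the layer sets $\{t,t+1\}$ and $\{t',t'+1\}$ are disjoint, so the two edges are disjoint.

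By symmetry it remains to treat $t'=t+1$ and $t'=t$. For $t'=t+1$, the only layer shared by the two edges is layer $t+1$, because $t+2\ne t$. In that layer, $e(i,j,t)$ contains only its apex. So the intersection has at most one vertex.

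For $t'=t$, the apexes $v^{t+1}_{0,j}$ and $v^{t+1}_{0,j'}$ coincide if and only if $j=j'$. The two edges share a base vertex in column $x$ if and only if $\alpha_jx+i=\alpha_{j'}x+i'$ in $\F_{\p}$, i.e.
\[
(\alpha_j-\alpha_{j'})x=i'-i .
\]
\begin{itemize}
\item If $j=j'$, then distinctness of the edges forces $i\neq i'$. The equation then has no solution, so the edges share only the apex.
\item If $j\ne j'$, the apexes differ. Since $\alpha_0=0,\alpha_1,\dots,\alpha_{k-2}$ are pairwise distinct, we have $\alpha_j-\alpha_{j'}\neq 0$. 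The equation therefore has exactly one solution $x\in\F_{\p}$, which may or may not lie in $\{0,\dots,k-2\}$. In either case the edges share at most one base vertex.
\end{itemize}

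This completes all cases. I expect no real obstacle here. The only points needing care are the identification of $x\in\Z_{k-1}$ with an element of $\F_{\p}$, which uses $\p\ge k-1$ to keep the columns distinct, and the use of $\ell\ge3$ to separate the layers.
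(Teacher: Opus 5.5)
Your proof is correct and follows the same case analysis as the paper: disjoint layer sets, adjacent layers meeting only in the apex, and equal layers handled via the equation $(\alpha_j-\alpha_{j'})x=i'-i$ over $\F_{\p}$. You are in fact slightly more careful than the paper, since you make explicit that $\ell\ge 3$ is needed in the adjacent-layer case and that $\p\ge k-1$ lets you pass from at most one solution in $\F_{\p}$ to at most one column in $\Z_{k-1}$.
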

\begin{proof}	
	Take two distinct edges $e(i,j,t)$ and $e(i',j',t')$. If their layer-sets
	$\{t,t+1\}$ and $\{t',t'+1\}$ are disjoint, then the two edges are disjoint.
	If $t'=t+1$ or $t'=t-1$, then they contains at most one common vertex at their common layer.
	
	It remains to consider the case $t'=t$. If $j=j'$, then their vertices in layer $t$ can coincide only for some
	$x\in\mathbb Z_{k-1}$ satisfying
	$\alpha_jx+i=\alpha_{j}x+i'$ in $\Z_{\p}$. Hence, $i=i'$. It is a contradiction.
	
	If $j\neq j'$, then $\alpha_j\neq\alpha_{j'}$, i.e., $\alpha_j-\alpha_{j'}$ is invertible in $\F_{\p}$, and hence
	$
	(\alpha_j-\alpha_{j'})x=i'-i
	$
	has at most one solution in $\mathbb F_{\p}$. Thus it has at most one solution $x$ in $\mathbb \Z_{k-1}$. 
	
	Therefore, two distinct edges have
	at most one common vertex. 
\end{proof}

\noindent\textbf{Proof of Theorem \ref{th1.1}}. 
	We first prove the upper bound. Since $\{v^t_{i,j}: i\in \Z_{\p} \}$ is the blow-up vertex set of the vertex $v^t_{0,j}$ for any $t\in\Z_{\ell} $ and $j\in \Z_{k-1}$, $O^{(k)}_{\ell,\p}$ is a subhypergraph of the $\p$-blow-up of $Z^{(k)}_{\ell}$. Thus by Lemmas \ref{lem_blow} and \ref{lem_Z}, for any $\varepsilon>0$,
	there exist $0<d<\varepsilon$ and an $\ell\in \mathbb{N}$ such that $ \pi_{\co}(O^{(k)}_{\ell,\p}) \leq \pi_{\co} (Z_{\ell}^{(k)} ) \leq d <\varepsilon$.
	
	
	Now we prove the lower bound.	
	We choose a prime $p$ dividing $k$ as follows.
	
	(i) If $k$ has an odd prime divisor, choose one such divisor and call it $p$.
	
	(ii) If $k$ has no odd prime divisor, then $k$ is a power of $2$ and $4 \mid k$ since $k \geq  3$. In
	this case, choose $p = 2$.
	
	Set
	\[
	q=\vp(\ell)+1
	\qquad\text{and}\qquad
	m=p^q.
	\]   
For every $(k-1)$-set $S=\{u_1,\dots,u_{k-1}\}$ in $H_m^{(k)}(n)$, a vertex $y\in N(S)$ if and only if $\chi(y)=j$, where
\[
j\equiv 1-\sum_{i=1}^{k-1}\chi(u_i)\pmod m.
\]
Thus $ V_j \setminus S \subseteq N(S)$.  Therefore
\begin{equation}\label{eq:codegree-lower-bound}
	\delta_{\co}(H_m^{(k)}(n))
	\ge
	\left\lfloor\frac nm\right\rfloor-(k-1).
\end{equation}

	We will prove that $H_m^{(k)}(n)$ is $O^{(k)}_{\ell,\p}$-free.
	Assume, for contradiction, that $H_m^{(k)}(n)$ contains a copy of
	$O^{(k)}_{\ell,\p}$. By Definition \ref{O^k}, for $t\in \Z_{\ell}$ and $j_0 \in\Z_{k-1}$, any vertex $v_{0,j_0}^{t +1}$ is contained in $\p$ hyperedges which cover each vertex in the $t$ layer $\{ v_{i,j} ^{t}: i\in \Z_{\p}, j\in \Z_{k-1}\}$ exactly once. Thus, 
    $$\p \cdot \chi(v_{0,j_0}^{t +1}) +  \sum_{i\in \Z_{\p}, j\in \Z_{k-1}} \chi( v_{i,j} ^{t} ) \equiv \p \pmod m.$$
	Since $p\mid k$ and $\gcd (\p,k)=1$, we have $\gcd(\p,m)=\gcd(\p,p^q) =1$. Therefore, 
	\[
	\chi(v_{0,0}^{t +1})=\chi(v_{0,1}^{t +1})=\chi(v_{0,2}^{t +1})=\cdots=\chi(v_{0,k-2}^{t +1})
	~\text{in~}\Z_m.
	\]
	Let $x_t=\chi(v_{0,0}^{t})=\chi(v_{0,1}^{t})=\cdots=\chi(v_{0,k-2}^{t}) \in\Z_m.$ Since $ v_{0,0}^{t +1} v_{0,0}^{t} v_{0,1}^{t} \cdots v_{0,k-2}^{t} $ is an hyperedge in $O^{(k)}_{\ell,\p}$, we have 
	\begin{equation}\label{eq:recurrence}
		x_{t+1}\equiv 1-(k-1)x_t\pmod m.
	\end{equation}
	 Define $Y_t=kx_t-1\in\Z_{km}$.
	From \eqref{eq:recurrence}, multiplying by $k$ gives a congruence modulo $km$:
	$kx_{t+1}-1
	\equiv
	k(1-(k-1)x_t)-1 =(1-k)(kx_t-1)
	\pmod {km}.$ That is 
	\begin{equation}\label{eq:Yrecurrence}
		Y_{t+1}\equiv (1-k)Y_t\pmod {km}.
	\end{equation}
	Iterating \eqref{eq:Yrecurrence} for $\ell$ steps gives
	\begin{equation*}\label{eq:Yiterate}
	Y_0 \equiv	Y_{\ell}\equiv (1-k)^{\ell}Y_0\pmod {km}.
	\end{equation*}	
	We claim that $\gcd(Y_0,km)=1$. In fact, every prime divisor $\rho$ of $km$ divides $k$, since $m=p^q$ and $p\mid k$. 
	Hence $Y_0=kx_0-1\equiv -1\pmod \rho.$ Therefore,
	$Y_0$ is invertible modulo $km$, and we have
	\begin{equation}\label{eq:main-congruence}
		(1-k)^{\ell}\equiv 1\pmod {km}.
	\end{equation}
	Since $m=p^{\vp(\ell)+1} $, the integer $km$
	is divisible by
	$p^{\vp(k)+\vp(\ell)+1}$.
	Thus \eqref{eq:main-congruence} implies
	\begin{equation}\label{eq:valuation-lower}
		\vp((1-k)^{\ell}-1)
		\ge
		\vp(k)+\vp(\ell)+1.
	\end{equation}
	By Lemma \ref{lem} and the choice of $p$, we have	
	\begin{equation}\label{eq:valuation-exact}
		\vp((1-k)^{\ell}-1)=\vp(k)+\vp(\ell).
	\end{equation}
	Equations \eqref{eq:valuation-lower} and \eqref{eq:valuation-exact} contradict
	each other.  This contradiction proves that $H_m^{(k)}(n)$ is
	$O^{(k)}_{\ell,\p}$-free.
	Therefore, 
	\[
	\pi_{\co}(O^{(k)}_{\ell,\p})
	\ge
	\liminf_{n\to\infty}
	\frac{\left\lfloor n/m\right\rfloor-(k-1)}{n}
	=
	\frac1m =
	\frac{1}{p^{\vp(\ell)+1}}
	>0.
	\]

	The proof is completed.$\hfill \square$

\vskip 5mm
\section*{Acknowledgments}
Many thanks to Lanchao Wang for his helpful suggestions in preparing this version of the paper. This research was supported by National Key R\&D Program of China under grant number 2024YFA1013900 and NSFC under grant number 12471327.

\vskip 5mm
\noindent\textbf{Data availability statement} \  Data sharing not applicable to this article as no datasets were generated or analyzed during the current study.

\vskip 5mm
\noindent\textbf{Declarations of conflict of interest}\  The authors declare that they have no known competing financial interests or personal relationships that could have appeared to influence the work reported in this paper.

\end{document}